\documentclass[12pt,twoside]{amsart}
\usepackage{mathrsfs}
\usepackage{mathtools}
\usepackage{amssymb}
\usepackage{verbatim}
\usepackage{amsmath}
\usepackage{comment}
\usepackage{amsthm,thmtools,xcolor}
\usepackage[colorlinks,linkcolor=blue,citecolor=blue, pdfstartview=FitH]
{hyperref}
\usepackage{bm}
\usepackage{a4wide}
\usepackage[latin1]{inputenc}
\usepackage[T1]{fontenc}
\usepackage{times}
\usepackage{hyperref}
\usepackage{amssymb,latexsym}
\usepackage{enumerate}

\newcommand{\dbar}{\ensuremath{\overline\partial}}

\usepackage{bbm}
\usepackage{etoolbox}
\makeatletter
\patchcmd\maketitle
  {\uppercasenonmath\shorttitle}
  {}
  {}{}
\patchcmd\maketitle
  {\@nx\MakeUppercase{\the\toks@}}
  {\the\toks@}
  {}
  {}{}
\patchcmd\@settitle
  {\uppercasenonmath\@title}
  {}
  {}{}
\patchcmd\@setauthors
  {\MakeUppercase{\authors}}
  {\authors}
  {}{}

\newcommand{\D}{\ensuremath{\mathbb{D}}}

\newcommand{\Ric}{\operatorname{Ric}}
\newcommand{\ddbar}{i\partial\bar\partial}
\newcommand{\Scal}{\operatorname{Scal}} 
\newcommand{\inj}{\operatorname{inj}}

\def\w{\wedge}

\makeatletter
\newcommand{\sumprime}{\if@display\sideset{}{'}\sum%
            \else\sum'\fi}
\makeatother

\begin{document}

\numberwithin{equation}{section}

\newtheorem{theorem}{Theorem}[section]
\newtheorem{proposition}[theorem]{Proposition}
\newtheorem{conjecture}[theorem]{Conjecture}
\def\theconjecture{\unskip}
\newtheorem{corollary}[theorem]{Corollary}
\newtheorem{lemma}[theorem]{Lemma}
\newtheorem{observation}[theorem]{Observation}
\newtheorem{definition}{Definition}
\numberwithin{definition}{section} 
\newtheorem{remark}{Remark}
\def\theremark{\unskip}
\newtheorem{kl}{Key Lemma}
\def\thekl{\unskip}
\newtheorem{question}{Question}
\def\thequestion{\unskip}
\newtheorem{example}{Example}
\def\theexample{\unskip}
\newtheorem{problem}{Problem}

\address{Johannes Testorf: Department of Mathematical Sciences, Norwegian University of Science and Technology, Trondheim, Norway}

\email{johannes.testorf@ntnu.no}

\title[Explicit Asymptotics]{Explicit Estimates for the Bergman Kernel Form}

 \author{Johannes Testorf}
\date{\today}

\begin{abstract} Let $(L,e^{-\phi})$ be a positive Hermitian holomorphic line bundle over a compact Riemann surface $X$, and let $\omega=\ddbar\phi$.  We obtain explicit pointwise estimates for the Bergman form of the tensor power $mL$.  If $\Ric\omega\leq\omega$ and the shortest nonconstant closed geodesic has length at least $2\pi$, then
\[
    K_{m\phi}\geq \frac{2m-1}{4\pi}\,\omega,
\]
with sharpness holding for $(\mathbb P^1,\mathcal O_{\mathbb P^1}(2))$.  We also obtain a local version, depending on an upper curvature bound and the injectivity radius, which recovers the first two terms of the Bergman expansion when the curvature is constant.  Under the two-sided bound $-\omega\leq\Ric\omega\leq\omega$ and the same closed-geodesic hypothesis, we also prove
\[
K_{m\phi}\leq \frac{m\omega}{2\pi}
 \left(1+\frac{54.8\log(2m)}{m-\frac{1}2}\right).
\]
The lower estimates use the deformation to the tangent space version of the Ohsawa--Takegoshi theorem established by He, Wang, and the author, whereas the upper bound is obtained by observing the submean inequality with quantitative isothermal coordinates that were obtained in recent work by Eilat.

\end{abstract}
\thanks{Parts of this work were done during the author's stay at Jagiellonian University in Krak\'ow and Chalmers University in Gothenburg. The author is grateful for the hospitality received there and especially wants to thank Robert Berman, Zbigniew B\l ocki, and David Witt-Nystr\"om for stimulating discussions. The author is also grateful to his advisor Xu Wang for discussions and feedback regarding this article.  }
\maketitle

\tableofcontents

\section{Introduction}
Consider a compact complex manifold $X$ of complex dimension $n$ with a positive line bundle $(L,e^{-\phi})$.
We write
\[
 \omega:=\ddbar\phi.
\]
We denote the canonical bundle of $X$ as $K_X$. Next we define the Bergman form of level $m\in \mathbb N$ as
\begin{align}\label{eq:bergman-definition}
K_{m\phi}(x):=\sup_{F\in H^0(X,K_X+mL)\setminus\{0\}}\frac{i^{n^2}F\wedge\overline{F}e^{-m\phi}(x)}{\int_Xi^{n^2}F\wedge\overline{F}e^{-m\phi}}.
\end{align}
We will give special attention to the case where $X$ is a Riemann surface, i.e., $n=1$, as this simplifies some of the notions of curvature we use. In this case, we use the conventions
\[
 \Ric\omega:=-i\partial\bar\partial\log\omega,
 \qquad
 \Scal_\omega:=\frac{\Ric\omega}{\omega}.
\]
We also denote
\[
L_0:=\inf\{L(\gamma): \text{$\gamma$ is a nonconstant closed geodesic}\}.
\]
In \cite{RWOT} it was proved that under the conditions $L_0\ge 2\pi$ and $\mathrm{Ric}\,\omega\le \omega$ the following lower bound holds:
\[
K_\phi\ge \frac{\omega}{4\pi}.
\]
Since we have equality in the case $X=\mathbb P^1$ and $\phi = 2\log(1+|z|^2)$, this is sharp.

However, since it is known that
\[
\frac{1}{m}K_{m\phi} \to \frac{\omega}{2\pi}, \qquad \text{for } m\to \infty,
\]
a natural question is whether this bound can be generalized to reflect this asymptotic behavior. The following result extends the estimate to every tensor power and captures the first correction to the leading asymptotic term.
\\~~\\
\textbf{Theorem A.} \emph{Let $X$ be a compact Riemann surface with K\"ahler metric $\omega=i\partial\dbar\phi$ where $\phi$ is the metric of an ample line bundle $(L,e^{-\phi})$. Assume furthermore that $L_0\ge 2\pi$ and $\mathrm{Ric}\,\omega\le \omega$. Then
\begin{align}\label{eq:theorem-A}
K_{m\phi}\ge \frac{2m-1}{4\pi }\omega.
\end{align}
The estimate is sharp for
$X=\mathbb P^1$, $L=\mathcal O_{\mathbb P^1}(2)$, and
$\phi=2\log(1+|z|^2)$.}
\medskip
\noindent

Furthermore, the Bergman kernel form in the case of a polarized Riemann surface enjoys the asymptotic expansion
\begin{align}\label{eq:bergman-expansion}
K_{m\phi}(z)\sim \frac{\omega}{2\pi}\left(m-\frac{1}{2}\mathrm{Scal}_\omega(z)+O(m^{-1})\right).
\end{align}
We refer to
\cite{Zelditch1998,Lu2000,MaMarinescu2007} for general results about the Bergman-kernel expansion. 

The local statement from which Theorem A follows is more flexible.
\\~~\\
\textbf{Theorem B(1).} \emph{Let $x\in X$ and let $\ell,m\in\mathbb N$.  Suppose that
}\begin{align}{\label{eq:B-assumptions}
 \inj_\omega(x)\geq \pi\sqrt{\frac{\ell}{m}}
 \quad\text{and}\quad
 \Scal_\omega\leq\frac{m}{\ell}
 \quad\text{on }B\!\left(x,\pi\sqrt{\frac{\ell}{m}}\right),
}\end{align}
 where $B(x,r)$ is our notation for the geodesic ball centered at $x$ with radius $r$. {Then
}\begin{align}{\label{eq:theorem-B}
 K_{m\phi}(x)
 \geq
 \frac{2\ell-1}{2\ell}\,\frac{m\omega(x)}{2\pi}.
}\end{align} 

The parameter $\ell$ can be chosen to reflect a prescribed upper curvature bound.
\begin{corollary}[Curvature-dependent asymptotics]\label{cor:curvature-asymptotics}
{Assume that $\Scal_\omega\leq\kappa$ on $X$ for some $\kappa>0$ and that
}\[
 {L_0\geq\frac{2\pi}{\sqrt\kappa}.
}\]
{For all sufficiently large $m$, put $\ell_m=\lfloor m/\kappa\rfloor$.  Then
}\begin{equation}{\label{eq:cor-lower}
 K_{m\phi}(x)
 \geq
 \frac{\omega(x)}{2\pi}
 \left(m-\frac{\kappa}{2}+O_\kappa(m^{-1})\right),
 \qquad x\in X.
}\end{equation}
In particular, if  {$\Scal_\omega\equiv\kappa$, the right-hand side has the same first two coefficients as \eqref{eq:bergman-expansion}. We also stress that the error term is independent of the choice of $x$.
}\end{corollary}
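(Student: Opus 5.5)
We apply Theorem B(1) at every point $x\in X$ with $\ell=\ell_m=\lfloor m/\kappa\rfloor$, and then expand the resulting constant in $m$.

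\emph{Curvature hypothesis.} Since $\ell_m\le m/\kappa$, we have $m/\ell_m\ge\kappa$. Hence $\Scal_\omega\le\kappa\le m/\ell_m$ on all of $X$, and in particular on the ball $B(x,\pi\sqrt{\ell_m/m})$. We need $\ell_m\ge1$, which holds for $m\ge\kappa$; this is the role of "sufficiently large $m$".

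\emph{Injectivity radius hypothesis.} We need $\inj_\omega(x)\ge\pi\sqrt{\ell_m/m}$. Because $\ell_m/m\le1/\kappa$, it suffices to show $\inj_\omega(x)\ge\pi/\sqrt\kappa$ for every $x$. For this we use Klingenberg's lemma on the compact surface $X$. It says that $\inj(X)$ is at least the minimum of two quantities: the conjugate radius, which is at least $\pi/\sqrt{\kappa}$ because the Gaussian curvature is bounded above by $\kappa$; and half the length of the shortest nonconstant closed geodesic, which is at least $L_0/2\ge\pi/\sqrt\kappa$ by hypothesis.

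One point must be checked here: the normalization. The Gaussian curvature of the Riemannian metric attached to $\omega$ must equal $\Scal_\omega$ under the paper's conventions $\Ric\omega=-\ddbar\log\omega$. The test case is $\mathbb P^1$ with $\phi=2\log(1+|z|^2)$. There $\omega=2i\,dz\wedge d\bar z/(1+|z|^2)^2$, so $\Ric\omega=\omega$ and $\Scal_\omega=1$. This metric is the round unit sphere, with Gaussian curvature $1$, $L_0=2\pi$ and injectivity radius $\pi$, so the conventions are consistent. The same check is implicitly needed for Theorem A to follow from Theorem B(1) with $\ell=m$, $\kappa=1$. This normalization check and the Klingenberg step are the only points needing care.

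\emph{The bound.} Theorem B(1) gives
\[
K_{m\phi}(x)\ge\frac{m\omega(x)}{2\pi}\Big(1-\frac1{2\ell_m}\Big)=\frac{\omega(x)}{2\pi}\Big(m-\frac{m}{2\ell_m}\Big).
\]
Write $m/\kappa=\ell_m+\theta$ with $\theta\in[0,1)$. Then
\[
\frac{m}{\ell_m}=\kappa\,\frac{\ell_m+\theta}{\ell_m}=\kappa+\frac{\kappa\theta}{\ell_m}.
\]
For $m\ge2\kappa$ we have $\ell_m\ge m/\kappa-1\ge m/(2\kappa)$, so
\[
0\le\frac{m}{\ell_m}-\kappa\le\frac{2\kappa^2}{m}.
\]
Substituting this into the bound gives
\[
K_{m\phi}(x)\ge\frac{\omega(x)}{2\pi}\Big(m-\frac{\kappa}{2}-\frac{\kappa^2}{m}\Big),
\]
whose error term depends only on $\kappa$ and not on $x$.

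\emph{Constant curvature.} When $\Scal_\omega\equiv\kappa$, the right-hand side $m-\kappa/2$ matches the first two coefficients of \eqref{eq:bergman-expansion}.
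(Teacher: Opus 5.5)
Your proposal is correct and follows the paper's proof essentially verbatim: Klingenberg's estimate gives $\inj_\omega\ge\pi/\sqrt\kappa$, Theorem B(1) is applied with $\ell=\ell_m$, and $m/\ell_m=\kappa+O_\kappa(m^{-1})$ finishes the argument. Your explicit error bound $\kappa^2/m$ for $m\ge 2\kappa$ and your check that $\Scal_\omega$ equals the Gaussian curvature are useful details that the paper leaves implicit.
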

Of course, it is natural to ask if one can also prove an upper bound. We obtain the following theorem.
\\~~\\
\textbf{Theorem C.} \emph{Let $X$ be a compact Riemann surface polarized by $(L,e^{-\phi})$ such that $L_0\ge 2\pi$ and 
\[
-\omega\le \mathrm{Ric\,}\omega\le \omega.
\]
Then we have
 \begin{align}\label{eq:theorem-C}
    K_{m\phi}\le m\frac{\omega}{2\pi}\left( 1+\frac{54.8\log(2m)}{m-1/2}\right).
\end{align}
}
While this clearly gives the correct asymptotic form of the leading term, the logarithmic term means that the next term in the asymptotic expansion will not follow from taking this estimate to the limit. We are unsure at this time whether this can be improved to at least remove the $\log (2m)$ in the numerator (possibly at the cost of a higher constant).  We also remark that our condition on $L_0$ here can be relaxed; we simply keep it here to be consistent with the condition for the lower bound. 

\begin{remark}
    It is known that the condition on $L_0$ cannot be removed entirely in Theorems A and C. Indeed, \cite[Section 3]{Lu01} details a counterexample where $L=K_X$, here we see that control on the curvature alone, (even with control on the genus) does not suffice to find universal bounds.
\end{remark}

\subsection{The case of general \texorpdfstring{$n$}{n}  and an estimate of B\l ocki--Darvas}
Theorem B(1) may be generalized easily to higher dimension. We let $K$ denote the real sectional curvature of $(X,\omega)$.
\\~~\\
\textbf{Theorem B(n).} \emph{Let $X$ be a compact complex manifold of dimension
$n$, polarized by $(L,e^{-\phi})$, and put $\omega=i\partial\bar\partial\phi$.
Fix $x\in X$ and let $\ell,m\in\mathbb N$ satisfy $2\ell>n$.  Suppose that
}\begin{equation}\label{eq:B-injectivity}
 \inj_\omega(x)\geq \pi\sqrt{\frac{\ell}{m}},
\end{equation}
\emph{and we have for any $z\in B(x,\pi\sqrt{\ell/m})$ the sectional curvature bound, 
}\begin{equation}\label{eq:B-curvature-comparison}
 K(U,V)
 \leq \frac{m}{4\ell}, \qquad U \perp V\in T_zX
\end{equation}
\emph{holds.  Then
}\begin{align}{\label{eq:theorem-Bn}
 K_{m\phi}(x)
 &\geq
 \frac{\Gamma(2\ell)}{(4\pi\ell)^n\Gamma(2\ell-n)}
 \frac{(m\omega)^n(x)}{n!}\notag\\
 &=\left(\prod_{j=1}^n\left(1-\frac{j}{2\ell}\right)\right)
 \frac{(m\omega)^n(x)}{(2\pi)^n n!}.
}\end{align}
The proof of Theorem B(n) will be identical to the proof of Theorem B(1). The main point in obtaining this generalization is to find the proper curvature assumption.
\\~~\\
The technique used here also lends itself to an alternative proof of a result by B\l ocki and Darvas in \cite{BD26}. To this end, consider a function $\varphi$ from the class
\[
\mathcal{E_\omega}:=\left\{\varphi\in \mathrm{PSH}(X,\omega): \int_X\omega^n =\int_X(\omega+\ddbar\varphi)^n\right\}.
\]
Assume furthermore that we have the curvature inequality
\begin{align}
    0<a\omega\le \omega+\ddbar \varphi
\end{align}
for some constant $a>0$. Define the Bergman function of level $m$ with respect to $\varphi$ as
\begin{align}
    B^\varphi_m(z):=\sup_{F\in H^0(X,mL)\setminus\{0\}}\frac{|F(z)|^2e^{-m(\varphi(z)+\phi(z))}}{\int_X|F|^2e^{-m(\varphi+\phi)}\omega^n}.
\end{align}
Then we have the following statement from \cite{BD26}.
\begin{proposition}
Let $\varphi$ be as above. Then there exists a sequence $\varepsilon_m=\varepsilon_m(a,\omega)\to 0$ such that
\[
B^\varphi_m\omega^n\ge \frac{m^n}{(2\pi)^n}(1-\varepsilon_m)a^n\omega^n.
\]
\end{proposition}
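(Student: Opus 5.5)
We will prove the proposition by the same mechanism as Theorem B(n): at each point $x$ we produce a global section by an Ohsawa--Takegoshi type extension from the point, and compare its norm against a model weight. The main difference is that $\psi:=\phi+\varphi$ is only assumed to satisfy $\ddbar\psi\ge a\omega$. The background metric $\omega$ is smooth and fixed, so its curvature and injectivity radius are bounded. Hence the geometric hypotheses of Theorem B(n), namely \eqref{eq:B-injectivity} and \eqref{eq:B-curvature-comparison}, hold for $\omega$ as soon as the radius $\pi\sqrt{\ell/m}$ is small. It is the weight, not the metric, that is now singular.

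\textbf{Step 1 (choice of parameters).} Fix $x\in X$ and choose $\ell=\ell_m\to\infty$ with $\ell_m/m\to 0$. Then the ball $B_m:=B(x,\pi\sqrt{\ell/m})$ shrinks to $x$, while the product $\prod_{j}(1-j/2\ell)$ tends to $1$. On $B_m$ we use the deformation-to-the-tangent-space version of Ohsawa--Takegoshi from \cite{RWOT}, as in the proof of Theorem B(n). For the model weight we take the Fubini--Study-type comparison function $\ell\log(1+\frac{m a}{4\ell}|\zeta|^2)$ in normal coordinates $\zeta$ for $\omega$. This function is the potential whose optimal constant gives the factor $\Gamma(2\ell)/((4\pi\ell)^n\Gamma(2\ell-n))$. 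The point is that the curvature of $m\psi$ dominates $m a\,\omega$, which in turn dominates $ma$ times the model flat form up to $(1+O(\ell/m))$ errors coming from the geometry of $\omega$. The extension theorem therefore yields a section $F\in H^0(X,mL)$ (after absorbing $K_X$ by twisting with $\omega^n$, which costs a factor $1+O(1/m)$ in the bound) with
\[
\frac{|F(x)|^2e^{-m\psi(x)}}{\int_X|F|^2e^{-m\psi}\omega^n}\ge \frac{m^n a^n}{(2\pi)^n}\prod_{j=1}^n\Bigl(1-\frac{j}{2\ell}\Bigr)\bigl(1-O(\ell/m)\bigr).
\]

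\textbf{Step 2 (handling the non-smooth weight).} The extension argument only needs a lower bound on $\ddbar(m\psi)$ plus a gluing weight. This gluing weight is $\chi$ times the model on $B_m$, corrected by $m\psi$ outside, and it stays quasi-psh because $\ddbar\psi\ge a\omega$ leaves room to absorb the cutoff errors when $\ell\to\infty$. No regularity of $\varphi$ enters except at $x$, where we need $\varphi(x)>-\infty$. Points with $\varphi(x)=-\infty$ form a pluripolar set on which the inequality is trivial in the sense of currents. Alternatively, we can approximate $\varphi$ by Demailly regularization with $\omega+\ddbar\varphi_k\ge(a-\delta_k)\omega$ and pass to the limit. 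The hypothesis $\varphi\in\mathcal E_\omega$ should be needed only to make the Bergman function well defined and comparable, so it plays no quantitative role.

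\textbf{Main obstacle.} The delicate step is the uniformity of $\varepsilon_m$ in $x$ and its independence from $\varphi$. The weight $m\psi$ is not comparable to its tangent model on $B_m$, since we only have a one-sided Hessian bound. We therefore have to rely on the precise form of the estimate in \cite{RWOT}: there the denominator uses $e^{-m\psi}$ on all of $X$, while the numerator is evaluated only at $x$, and only the curvature inequality of the total weight is used. I would first check that the comparison weight $m\psi-\ell\log(1+\frac{ma}{4\ell}|\zeta|^2)$ extended appropriately is plurisubharmonic up to an error of order $\ell/m$. If that fails, I would fall back on the regularization route. This yields $\varepsilon_m=O(1/\ell_m+\ell_m/m)$, for instance with $\ell_m\sim\sqrt m$, depending only on $a$ and the geometry of $\omega$.
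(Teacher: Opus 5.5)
There is a genuine gap, and it is exactly the step you pass over with ``the extension theorem therefore yields a section''. The deformation version of Ohsawa--Takegoshi needs an $S^1$-invariant metric $\phi_{\mathbb C}$ on $L_{\mathbb C}$ that is psh on all of $X_{\mathbb C}$, jointly in $(z,s)$. You never produce one.

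Your flat model does not localize. If $E$ in the construction of Theorem B is replaced by $\frac{ma}{4\ell}|\zeta|^2$, the slice $v_\alpha$ is nonzero on $\{E<\alpha/(2\ell-\alpha)\}$, and this set leaves every normal-coordinate ball as $\alpha\to2\ell$. That is why the paper uses $\tan^2$ of the distance, which is $+\infty$ outside a ball. So you are forced into a cutoff.

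Your claim that $\ddbar\psi\ge a\omega$ then ``leaves room to absorb the cutoff errors when $\ell\to\infty$'' is wrong. At the natural radius $r\asymp\sqrt{\ell/m}$ the model has size $\asymp\ell$ and gradient $\asymp\sqrt{m\ell}$, while $|\partial\chi|\asymp\sqrt{m/\ell}$ and $|\ddbar\chi|\asymp m/\ell$. The indefinite error terms are therefore of order $m$, the same order as the available positivity $ma\,\omega$, however large $\ell$ is. Whether some other cutoff radius works, together with control of the mixed $s$--$z$ terms a cutoff creates on $X_{\mathbb C}$, is precisely what your ``Main obstacle'' paragraph leaves open. Demailly regularization does not close the gap either: uniformity in the regularization index again requires a construction that uses only the lower bound on $\ddbar\psi$. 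Also, the exponent in your model must be $2\ell$, not $\ell$, to produce the factor $\Gamma(2\ell)/((4\pi\ell)^n\Gamma(2\ell-n))$ that you quote.

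The missing idea is that no model adapted to $\psi$ is needed at all. The paper runs Theorem B verbatim for the \emph{smooth} metric $a_m\omega$, where $a_m:=am-\rho$ and $-\rho\omega\le\Ric\omega\le\rho\omega$. Write $H^0(X,mL)=H^0(X,K_X+(mL-K_X))$ with weight $e^{-m(\phi+\varphi)}\omega^n$; its curvature is $\omega_{\varphi,m}:=m(\omega+\ddbar\varphi)+\Ric\omega\ge a_m\omega$. Take $E=\tan^2\bigl(d_{a_m\omega}(\cdot,x)/(2\sqrt{\ell_m})\bigr)$, with $\ell_m\asymp m$ chosen by compactness so that $K_{a_m\omega}\le1/(4\ell_m)$ and $\inj_{a_m\omega}\ge\pi\sqrt{\ell_m}$. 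The $\ddbar$-Hessian comparison with $(\mathbb P^n,\mathcal O(2\ell_m))$ then makes every $v_\alpha$ \emph{exactly} $a_m\omega$-psh, so
\[
\omega_{\varphi,m}+\ddbar v_\alpha=(\omega_{\varphi,m}-a_m\omega)+(a_m\omega+\ddbar v_\alpha)\ge0 ,
\]
and the central-fibre computation is unchanged. This gives $B^\varphi_m\ge a_m^n\Gamma(2\ell_m)/\bigl((4\pi\ell_m)^n\Gamma(2\ell_m-n)\bigr)$ with no cutoff, normal coordinates or regularization. Uniformity in $x$ and independence of $\varphi$ come for free.

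You already had both ingredients: that the hypotheses of Theorem B(n) hold for the fixed metric $\omega$ at small scale, and that only a lower bound on $\ddbar\psi$ should matter. The proof consists in combining them directly, not in replacing the distance-function weight by a flat model.
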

In \cite{BD26}, this was proved using first a local version of the Ohsawa--Takegoshi theorem and then a gluing process using the H\"ormander $\dbar$--estimates. Here we prove the proposition with a global Ohsawa--Takegoshi estimate on $X$, giving an alternate "globalized" proof of the estimate.

\section{Preliminaries}

\subsection{Deformation to the Tangent Space and Ohsawa--Takegoshi Theorem}
Effective lower bounds for Bergman kernels are closely related to $L^2$ extension.  The original Ohsawa--Takegoshi theorem is proved in \cite{OhsawaTakegoshi1987}; a deformation-theoretic formulation adapted to compact manifolds was established in \cite{RWOT}.  Stating the theorem itself will require introducing the deformation to the tangent space.

Let $X$ be a compact complex manifold of dimension $n$. Choose a point $x\in X$ and a coordinate system $z:U\to \mathbb C^n$ with $z(x)=0$ and put
\[
\mathcal{B}:=\left\{(\xi,s)\in \mathbb C^n\times\mathbb C: s\xi\in z(U)\right\}.
\]
Now we define a manifold $X_\mathbb C$ by gluing $\mathcal B$ with $X\times \mathbb C^*$ via the change of coordinates
\[
\mu:(\xi,s)\mapsto (z,s)= (s\xi,s).
\]
We call $X_\mathbb C$ the deformation to the tangent space of $x$. We also introduce the shorthand $X_s$ for the fiber $\pi_\mathbb C^{-1}(s)$ of a given $s\in \mathbb C$, where $\pi_\mathbb C:X_\mathbb C\to \mathbb C$ is the projection onto the coordinate $s$.  We may also extend the map $\mu$ via the identity to all of $X_\mathbb C$. 

Importantly, there exists a $\mathbb C^*$-action on $X_\mathbb C$ given by
\begin{align*}
    a\cdot(z,s)&=(z,a^{-1}s),\quad a\in \mathbb C^*, (z,s)\in X\times \mathbb C^*,\\
    a\cdot(\xi,s)&=(a\xi,a^{-1}s),\quad a\in \mathbb C^*, (\xi,s)\in \mathcal{B}.
\end{align*}

Now assume we are given a complex line bundle $L$ on $X$. We define a corresponding line bundle on $X_\mathbb C$ via
\[
L_\mathbb C:=\mu^*(L\times \mathbb C).
\]
We use the shorthand $L_s:=L_\mathbb C|_{X_s}$ and note that $L_1\cong L$ and that $L_0$ is trivial. We can now state the point case of one of the main results of \cite{RWOT} for projective manifolds.

\begin{theorem}[\cite{RWOT} Second Main Theorem]
    Let $X$ be a projective manifold and $L$ a line bundle on $X$. Let $\phi_\mathbb C$ be a metric on $L_\mathbb C$ such that $(L_\mathbb C,e^{-\phi_\mathbb C})$ is pseudoeffective, and $\phi_\mathbb C$ is $S^1$ invariant with respect to the $\mathbb C^*$-action on $X_\mathbb C$. (We will use the shorthand $\phi_s:=\phi_\mathbb C|_{X_s}$.) Then if we have 
    \[
    \int_{X_0}\omega^n(x) e^{-\phi_0}<\infty,
    \]
    there exists a section $F\in H^0(X,K_X+L)$ with $i^{n^2}F\wedge \overline{F}e^{-\phi_1}(x)=\omega^n (x)$ and
    \[
    \int_{X}i^{n^2}F\wedge \overline{F}e^{-\phi_1}\le \int_{X_0}\omega^n (x)e^{-\phi_0}.
    \]
    In particular, the Bergman kernel form satisfies
    \[
    K_\phi(x)\ge \frac{\omega^n(x)}{\int_{X_0}\omega^n(x) e^{-\phi_0}}.
    \]
\end{theorem}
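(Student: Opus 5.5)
The plan is to deduce the estimate from the plurisubharmonic variation of fiberwise Bergman kernels (Berndtsson's theorem, in the form with singular weights due to Berndtsson--P\u{a}un and Guan--Zhou). We apply it to the family $\pi_\mathbb C:X_\mathbb C\to\mathbb C$ with the bundle $K_{X_\mathbb C/\mathbb C}+L_\mathbb C$ and the weight $\phi_\mathbb C$. Let $\hat x(s)$ denote the holomorphic section of $\pi_\mathbb C$ given by $\xi=0$; for $s\neq 0$ this is the point $x\in X_s\cong X$. Define $u(s):=\log K_{\phi_s}(\hat x(s))$, where $K_{\phi_s}$ is the Bergman form of $(X_s,L_s,\phi_s)$, written as a density against $i^{n^2}d\xi\wedge d\bar\xi$ (equivalently, against a fixed multiple of $\omega^n(x)$ transported to $\xi$-coordinates). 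The claim to establish first is that $u$ is subharmonic on $\mathbb C$, because $\log$ of the relative Bergman kernel is plurisubharmonic on $X_\mathbb C$ whenever $\phi_\mathbb C$ is pseudoeffective. We then restrict this to the holomorphic curve $\hat x(\mathbb C)$.

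Next we use the symmetry. Since $\phi_\mathbb C$ is $S^1$-invariant and the $\mathbb C^*$-action preserves $\xi=0$, the function $u$ is radial: $u(e^{i\theta}s)=u(s)$. Here we must check that the action, which rotates $\xi$ by $e^{i\theta}$, multiplies $d\xi\wedge d\bar\xi$ by $|e^{i\theta}|^{2n}=1$, so the trivialization is respected. A radial subharmonic function satisfies the sub-mean value inequality on circles, so
\[
u(0)\le \frac1{2\pi}\int_0^{2\pi}u(e^{i\theta})\,d\theta=u(1).
\]
At $s=1$ we have $\xi=z$, so $u(1)$ is just $\log$ of the Bergman form $K_{\phi_1}(x)$ relative to $i^{n^2}dz\wedge d\bar z$. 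At $s=0$ the fiber is $X_0\cong\mathbb C^n$ with $L_0$ trivial. There the constant form $F_0=c\,d\xi$, with $c$ normalized so that $i^{n^2}F_0\wedge\overline F_0e^{-\phi_0}(0)=\omega^n(x)$, is admissible precisely when $\int_{X_0}\omega^n(x)e^{-\phi_0}<\infty$. This gives $K_{\phi_0}(0)\ge\omega^n(x)/\int_{X_0}\omega^n(x)e^{-\phi_0}$. Combining the two yields the Bergman form inequality. The section $F$ with prescribed value and the norm bound is then obtained as the extremal section realizing $K_{\phi_1}(x)$, rescaled to have value $\omega^n(x)$ at $x$. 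Existence follows from finite-dimensionality of $H^0(X,K_X+L)$ and the definition of the supremum.

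The main obstacle is the first step. Berndtsson's theorem is stated for proper smooth families, or for families of pseudoconvex domains, with fiberwise compactness or completeness. Here, however, the central fiber $X_0=\mathbb C^n$ is noncompact, and $\phi_\mathbb C$ is merely pseudoeffective, hence possibly singular. I would first try to handle this by exhaustion. Replace $X_\mathbb C$ near $s=0$ by the open sets $\{|\xi|<R\}$ glued to $X\times\mathbb C^*$, together with the complement of a neighborhood of $x$; more simply, work over the disc $|s|\le1$ with the domain $\Omega_R=\{(\xi,s):|s\xi|\in z(U),\ |\xi|<R\}\cup (X\setminus z^{-1}(\overline{\mathbb B}_{\epsilon})) \times\mathbb C^*$. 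Each $\Omega_R$ should be a locally pseudoconvex family to which the Berndtsson--P\u{a}un/Guan--Zhou version applies, after regularizing $\phi_\mathbb C$ by Demailly approximation, using projectivity of $X$ for global approximation. Finally we let $R\to\infty$ using monotone convergence of the Bergman kernels, $K^{(R)}_{\phi_0}(0)\downarrow K_{\phi_0}(0)$. A secondary point to verify is that the Bergman kernels of the truncated fibers $X_s$, $s\neq 0$, still dominate or converge to those of $X$. This is where compactness of $X$ and the Riemann extension theorem across the removed small ball enter.
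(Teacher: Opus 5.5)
Your proposal has a genuine gap: the subharmonicity across $s=0$, which carries all the content, is not established, and the exhaustion you propose for it fails. The paper gives no proof of this statement (it is quoted from \cite{RWOT}), so your argument has to stand on its own. Write $B(s)=\sup_F|f(\hat x(s))|^2/\|F\|_s^2$ for the fibrewise Bergman kernel in the frame $d\xi\otimes e$, $e\in L_x$.

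Over $\mathbb C^*$ the family is just $X\times\mathbb C^*$ with compact fibres. There Berndtsson--P\u{a}un does make $\log B$ subharmonic, and radiality makes it a convex function of $\log|s|$. What the theorem needs, however, is $\lim_{s\to 0}B(s)\ge 1/\int_{X_0}e^{-\phi_0}$. That is, for small $s$ you must produce sections on $X$ with value $1$ at $x$ and $\phi_s$-norm close to $\int_{X_0}e^{-\phi_0}$. This is an $L^2$-extension statement in its own right. It does not follow from Berndtsson's theorem, because the family is not proper at $s=0$. The semicontinuity you get for free (normal families plus Fatou, using upper semicontinuity of $\phi_{\mathbb C}$) is the opposite inequality $\limsup_{s\to 0}B(s)\le B_{X_0}(0)$.

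Your exhaustion does not supply this. For $0<|s|<\epsilon/R$ the fibre of $\Omega_R$ is $z^{-1}(\mathbb B_{R|s|})\sqcup\bigl(X\setminus z^{-1}(\overline{\mathbb B}_\epsilon)\bigr)$; an annulus is removed, not a small ball. So the kernel at $\hat x(s)$ is that of the ball $\{|\xi|<R\}$, while for $|s|>\epsilon/R$ the fibre is all of $X$. Shrinking the domain strictly increases the Bergman kernel, so $B^{(R)}$ jumps \emph{down} as $|s|$ crosses $\epsilon/R$. A radial subharmonic function cannot do that. For $n\ge 2$ the boundary $\{|z|=\epsilon\}$ is also pseudoconcave, so $\Omega_R$ is not a pseudoconvex family in any case. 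If the scheme worked, it would give $B(1)\ge B_{\{|\xi|<R\}}(0)$, which is false once $R>\epsilon$ is small.

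There is also an error in the function you declare subharmonic. Your $u$ is the logarithm of the Bergman \emph{form}, which contains the pointwise factor $e^{-\phi_s}$, so $u=\log B-\phi_{\mathbb C}\circ\hat x$. Berndtsson's theorem concerns $\log B$ only, and $\phi_{\mathbb C}\circ\hat x$ is itself radial subharmonic.

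Here is a concrete counterexample. On $X=\mathbb P^1$, $L=\mathcal O(2)$, let $E=|z|^2/(1+|z|^2)$ and $\phi_{\mathbb C}=\mu^*\bigl(2\log(1+|z|^2)\bigr)+2\log\bigl(|s|^2+\eta+E/|s|^2\bigr)$. In the affine chart this equals $2\log\bigl(|s^2|^2+|s^2z|^2+|z|^2+\eta|s|^2+\eta|sz|^2\bigr)-2\log|s|^2$, and similarly near $z=\infty$. So it is psh, $S^1$-invariant and smooth across $X_0$, with $\phi_0=2\log(|\xi|^2+\eta)$. Only constants are $L^2$ on $X_0$, and $K_{\phi_0}(0)=(2\pi\eta)^{-1}\,i\,d\xi\wedge d\bar\xi$ blows up as $\eta\to 0$. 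Meanwhile $K_{\phi_1}(x)$ stays bounded, so $u(0)\le u(1)$ fails.

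Working with $\log B$ instead, the argument yields $K_{\phi_1}(x)\ge\omega^n(x)/\int_{X_0}\omega^n(x)e^{-(\phi_0-\phi_1(x))}$. That is, $e^{-\phi_0}$ must be measured in the frame of $L_x$ normalised by $\phi_1(x)=0$, not by $\phi_0(0)=0$ as in your choice of $F_0$. The two agree only when $\phi_{\mathbb C}$ is constant along $\hat x$. That does hold in the paper's proof of Theorem B, where $\phi_{\mathbb C}(\hat x(s))=m\phi(x)=0$ for $|s|\le 1$.
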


\subsection{Comparison Theorems}
One of the key results in obtaining our estimates will be the Hessian comparison theorem from Riemannian geometry. We will use a $i\partial\dbar$-version from \cite{Wang24}.

\begin{theorem}[$\partial\overline{\partial}$-Hessian Comparison Theorem]\label{thm:HessianComparison}
    Let $X_1,X_2$ be K\"ahler manifolds of equal dimension. Let $(\gamma_i)_{i=1,2}:[0,b]\to X_i$ be unit-speed geodesics such that
        \[
        b\le \min_i\{\mathrm{inj}(\gamma_i(0))\}
        \]
        and for all $t\in [0,b]$, and $v_i\perp\dot \gamma_i(t)$ we have for the sectional curvature estimate
        \[
        K(\dot \gamma_1(t),v_1)\le K(\dot \gamma_2(t),v_2).
        \]
        Let $d_i$ be the distance function on $X_i$ from $\gamma_i(0)$. For increasing (resp. decreasing), smooth $f:(0,b)\to\mathbb R$ we have 
        \[
        i\partial\overline\partial (f\circ d_1(\cdot ,\gamma_1(0)))(V_1,V_1)\ge i\partial\overline\partial (f\circ d_2(\cdot ,\gamma_2(0)))(V_2,V_2)
        \]
        (resp. $\le$) where $V_i\in T_{\gamma_i(t)}X_i$ with $|V_1|=|V_2|$, 
        \[
        \langle\dot\gamma_1(t),V_1\rangle= \langle\dot\gamma_2(t),V_2\rangle, \quad  \langle\dot\gamma_1(t),JV_1\rangle= \langle\dot\gamma_2(t),JV_2\rangle.
        \]
\end{theorem}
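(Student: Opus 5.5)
This is the classical Hessian comparison theorem (Rauch/Jacobi-field comparison), restricted to the complex Hessian. The plan is to reduce to the real Hessian of the distance function and then compare the Jacobi-field index forms.

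\emph{Step 1: reduce to the Hessian of $d$.} Write $r=d_i$. Since $b\le\inj(\gamma_i(0))$, $r$ is smooth on $B(\gamma_i(0),b)\setminus\{\gamma_i(0)\}$ and $|\nabla r|=1$. On a Kähler manifold $J$ is parallel, so for a real vector $V$
\[
i\partial\bar\partial u(V,V)=\tfrac12\bigl(\nabla^2u(V,V)+\nabla^2u(JV,JV)\bigr),
\]
up to the paper's normalization. With $u=f\circ r$,
\[
\nabla^2u=f''(r)\,dr\otimes dr+f'(r)\,\nabla^2 r,
\]
so
\[
i\partial\bar\partial u(V,V)=\tfrac12 f''(r)\bigl(\langle\dot\gamma,V\rangle^2+\langle\dot\gamma,JV\rangle^2\bigr)+\tfrac12 f'(r)\bigl(\nabla^2r(V,V)+\nabla^2r(JV,JV)\bigr).
\]
The hypotheses $|V_1|=|V_2|$, $\langle\dot\gamma_1,V_1\rangle=\langle\dot\gamma_2,V_2\rangle$ and $\langle\dot\gamma_1,JV_1\rangle=\langle\dot\gamma_2,JV_2\rangle$ make the $f''$ terms equal. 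Since $\nabla^2r(\dot\gamma,\cdot)=0$, only the components $V^\perp$ and $(JV)^\perp$ orthogonal to $\dot\gamma$ enter. Their norms coincide on both manifolds because
\[
|V^\perp|^2=|V|^2-\langle\dot\gamma,V\rangle^2 .
\]
So it suffices to show
\[
\nabla^2r_1(W_1,W_1)\ge\nabla^2r_2(W_2,W_2)\quad\text{whenever } W_i\perp\dot\gamma_i(t),\ |W_1|=|W_2|.
\]
Multiplying by $f'\ge0$ (resp.\ $\le0$) then gives the claim with the stated direction.

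\emph{Step 2: comparison of $\nabla^2 r$.} This is the standard index-form argument. Let $J_2$ be the Jacobi field along $\gamma_2$ with $J_2(0)=0$ and $J_2(t)=W_2$. Then $\nabla^2r_2(W_2,W_2)=I_2(J_2,J_2)$, where
\[
I(Y,Y)=\int_0^t\bigl(|Y'|^2-K(\dot\gamma,Y)|Y|^2\bigr)\,ds .
\]
Transplant $J_2$ to $\gamma_1$ using parallel orthonormal frames orthogonal to $\dot\gamma_i$, chosen so that the frame maps $W_2$ to $W_1$. Call the result $Y_1$; it has the same $|Y'|$ and $|Y|$ pointwise. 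Because $K_1\le K_2$ along the geodesics, $I_1(Y_1,Y_1)\ge I_2(J_2,J_2)$.

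Here the inequality needs the opposite direction, a lower bound on $\nabla^2r_1$. So I will run the argument the other way, starting from the Jacobi field $J_1$ on $\gamma_1$ and transplanting it to $\gamma_2$. The index lemma (no conjugate points before $t\le b$, guaranteed by the injectivity radius assumption) says that the Jacobi field minimizes $I_2$ among fields with the same endpoints. Hence
\[
\nabla^2r_2(W_2,W_2)=I_2(J_2,J_2)\le I_2(Y_2,Y_2)\le I_1(J_1,J_1)=\nabla^2r_1(W_1,W_1),
\]
where the middle inequality uses $K_2\ge K_1$. The absence of conjugate points is needed on $\gamma_2$; since $b\le\inj(\gamma_2(0))$ is assumed, this holds.

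\emph{Main obstacle.} The delicate point is Step 1: checking that the complex Hessian involves only $V$ and $JV$, so that the two inner-product conditions exactly match the radial parts on both manifolds. This uses the Kähler condition ($J$ parallel and $J\dot\gamma\perp\dot\gamma$). A second subtlety is that the index comparison must be applied separately to $V^\perp$ and $(JV)^\perp$, each of which has matched norm, and the results then summed.
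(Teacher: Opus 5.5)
Your argument is correct and is essentially the one the paper relies on. The paper gives no proof of its own; it cites Greene--Wu's Hessian comparison, which is exactly the Jacobi-field index-form comparison in your Step 2 (in its second, correct direction: transplant $J_1$ to $\gamma_2$ and apply the index lemma on $\gamma_2$). That comparison is combined with the K\"ahler identity $i\partial\bar\partial u(V,JV)=\tfrac12\bigl(\nabla^2u(V,V)+\nabla^2u(JV,JV)\bigr)$ and with the separate treatment of $V$ and $JV$, as in your Step 1.

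When you write it up, delete the false start in Step 2. Also state explicitly that $t\in(0,b)$ forces $t<\inj(\gamma_i(0))$. This is what makes $d_i$ smooth at $\gamma_i(t)$ and rules out conjugate points on $(0,t]$; at $t=\inj$ itself both can fail, for example on $\mathbb P^1$.
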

\begin{proof}
   We refer to \cite[Lemma 1.13, Theorem A]{greene2006function}, \cite{Wang24}.

\end{proof}
We will also need Klingenberg's estimate, a consequence of the Rauch comparison theorem obtained in \cite{Klingenberg1995}.
\begin{theorem}[Klingenberg]\label{thm:klingenberg}
Let  {$(M,g)$ be a compact Riemannian manifold, and let $L_0$ be the length of
its shortest nonconstant closed geodesic.  If the sectional curvature
satisfies $K_g\leq\kappa$ for some $\kappa>0$, then
}\begin{equation}{\label{eq:klingenberg}
 \frac{L_0}{2}\ge\inj_g(M)\geq
 \min\left\{\frac{\pi}{\sqrt\kappa},\frac{L_0}{2}\right\}.
}\end{equation}{If $K_g\leq0$, then $\inj_g(M)=L_0/2$.
}\end{theorem}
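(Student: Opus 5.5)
The plan is the classical cut-locus argument, using the Rauch comparison theorem for the conjugate-point part. Since $M$ is compact, $\inj_g$ is continuous and attains its minimum at some $p\in M$. For each $x$, $\inj_g(x)$ equals the distance from $x$ to its cut locus, so there is a cut point $q$ of $p$ with $d(p,q)=\inj_g(M)=:r$. The standard dichotomy then applies: either $q$ is conjugate to $p$ along a minimizing geodesic, or there are at least two distinct minimizing geodesics from $p$ to $q$.

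\emph{Conjugate case.} By the Rauch comparison theorem with the round sphere of curvature $\kappa$, the hypothesis $K_g\le\kappa$ rules out conjugate points along a geodesic before length $\pi/\sqrt\kappa$. Hence $r\ge\pi/\sqrt\kappa$. If $K_g\le0$, we compare with flat space instead, so there are no conjugate points at all and this case cannot occur.

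\emph{Non-conjugate case.} Let $\gamma_1,\gamma_2$ be two minimizing geodesics from $p$ to $q$. A first-variation argument shows $\dot\gamma_1(r)=-\dot\gamma_2(r)$. Otherwise some direction $v\in T_qM$ decreases both $d(p,\cdot)$ along the two branches, and moving $q$ slightly along $v$ would produce points closer to $p$ that still have two minimizing geodesics from $p$, hence cut points of $p$ at distance $<r$, a contradiction. So $\gamma_1\cup\gamma_2$ is a geodesic loop at $p$, smooth at $q$.

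The step I expect to be the main obstacle is showing the loop is also smooth at $p$, which is where minimality of $\inj_g$ at $p$ enters. The roles of $p$ and $q$ are symmetric: $d(q,p)=r$, $p$ is a cut point of $q$, and $\inj_g(q)\ge r$. Hence $p$ is a nearest cut point of $q$, not conjugate to $q$ along either geodesic, and the same first-variation argument applied at $q$ gives $\dot\gamma_1(0)=-\dot\gamma_2(0)$. The loop is therefore a smooth closed geodesic of length $2r$, so $2r\ge L_0$. Combining the two cases gives $\inj_g(M)\ge\min\{\pi/\sqrt\kappa,L_0/2\}$. When $K_g\le0$, only the non-conjugate case occurs, so $\inj_g(M)\ge L_0/2$, and with the upper bound below this gives equality.

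\emph{Upper bound $\inj_g(M)\le L_0/2$.} Take a closed geodesic $c$ of length $L_0$. If it is not realized by the infimum, take closed geodesics of length approaching $L_0$ and pass to the limit. Put $p=c(0)$ and $q=c(L_0/2)$. The two arcs of $c$ are distinct geodesics of length $L_0/2$ from $p$ to $q$. If $\inj_g(p)>L_0/2$, then $\exp_p$ would be injective on the ball of radius $L_0/2$ plus a little more. But the two arcs correspond to distinct initial vectors of length $L_0/2$ with the same image $q$, a contradiction. Hence $\inj_g(M)\le\inj_g(p)\le L_0/2$.
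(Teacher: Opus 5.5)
Your proposal is correct. It is the classical cut-locus argument: Rauch comparison in the conjugate case, and otherwise Klingenberg's lemma that a nearest cut point realizing $\inj_g(M)$ is the midpoint of a closed geodesic of length $2\inj_g(M)$. The paper does not reproduce a proof; it just cites \cite{Klingenberg1995}, so your argument is essentially the one behind that citation. One minor simplification: the limiting argument in your upper bound is unnecessary, since applying the two-arc argument to every closed geodesic and taking the infimum already gives $\inj_g(M)\le L_0/2$.
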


\section{The Lower Bound}
\subsection{Proof of Theorem B}

Fix $x\in X$. We will consider $X$ to be imbued with the scaled metric 
\[\omega_m:=m\omega,
 \qquad
 d_m:=d_{\omega_m}=\sqrt m\,d_\omega.
\]
The assumptions \eqref{eq:B-assumptions} become
\[
 \inj_{\omega_m}(x)\geq\pi\sqrt\ell
 \quad\text{and}\quad
 \Scal_{\omega_m}=\frac{1}{m}\Scal_\omega\leq\frac{1}{\ell}
 \quad\text{on }B(x,\pi\sqrt\ell).
\]
Similarly, in the higher dimensional case, (\ref{eq:B-curvature-comparison}) becomes
\[
K_{\omega_m}(U,V)= \frac{1}{m}K_{\omega}(U,V)
 \leq \frac{1}{4\ell}, \qquad U \perp V\in T_zX.
\]
The key now is to find an appropriate choice of $\phi_\mathbb C$ for the deformation to $x$. To this end define the functions
\[
E(z):=\begin{cases}
    \tan^2\left(\frac{d_m(z,x)}{2\sqrt \ell}\right), &d_m(z,x)<\sqrt{\ell}\pi\\
    \infty, & d_m(z,x)\ge \sqrt{\ell}\pi
\end{cases}
\]
on $X$.  Now we set 
\[
\phi_\mathbb C(z,s):=\mu^*_X(m\phi)+\begin{cases}
2\ell\log
\displaystyle
\frac{|s|^2+\mu_X^*E(z)}
{|s|^2(1+\mu_X^*E(z))},
&0<|s|\leq1,\\[9pt]
0,&|s|\geq1,
\end{cases},
\]
where $\mu_X:=\pi_X\circ\mu:X_\mathbb C\longrightarrow X$. We verify that this is a positive metric using the Hessian comparison theorem.

We will compare with the case where $X_2=\mathbb P^n$ and is polarized by the line bundle $\mathcal{O}_{\mathbb P^n}(2\ell)$. This means that the metric is given by $2\ell\phi_{FS}:=2\ell\log(1+|z|^2)$ and the K\"ahler form by $2\ell\omega_{FS}=2\ell i\partial\overline{\partial}\log(1+|z|^2)$ in any affine chart. A direct computation gives for the point $0\in \mathbb P^n$ that the distance is given by the formula
\[
 d_{2\ell \omega_{FS}}(z,0)=2\sqrt \ell\int_0^{|z|}\frac{dx}{1+x^2}=2\sqrt \ell\arctan|z|.
\]
Hence, for $\mathbb P^n$ with this polarization, we have $\mathrm{inj}_{2\ell \omega_{FS}}(0)=\sqrt \ell\pi$, and we know that for the sectional curvature
\[
K(V,W)\ge\begin{cases}\frac{1}{\ell} & n=1\\
        \frac{1}{4\ell}&n>1\end{cases},
\]
with equality in the $n=1$ case.
Now we will generalize the argument from \cite{RWOT,Wang24} to the ${\ell}>1$ case. Since the argument does not differ in any deep way, we keep the exposition brief.
Set
\[
\widehat v_t(z):=2\ell\log\frac{1+e^tE(z)}{1+E(z)}
 \quad(t\geq0),
 \qquad
 \widehat v_t:=0\quad(t\leq0).
\]
For $0\leq\alpha\leq2\ell$, consider the partial Legendre transform
\[
 v_\alpha:=\inf_{t\geq0}(\widehat v_t-\alpha t).
\]
For $0<\alpha<2\ell$, direct minimization gives
\begin{equation}\label{eq:valpha-explicit}
 v_\alpha(z)=
 \begin{cases}
  \alpha\log E(z)-2\ell\log(1+E(z))+c_{\alpha,\ell},
  &0\leq E(z)\leq \dfrac{\alpha}{2\ell-\alpha},\\[7pt]
  0,&E(z)\geq \dfrac{\alpha}{2\ell-\alpha},
 \end{cases}
\end{equation}
where
  \[
 c_{\alpha,\ell}
 =2\ell\log\frac{2\ell}{2\ell-\alpha}
 -\alpha\log\frac{\alpha}{2\ell-\alpha};
\]
Similarly to the proof of \cite[Theorem A]{RWOT} we have by direct computation that this is a bounded test curve on $\mathbb P^n$, (i.e., a concave decreasing map into the psh functions which is identically zero for an $\alpha\le 0$ and $-\infty$ for $\alpha>>0$)  which means that $\widehat v_t$ was actually a subgeodesic ray, in other words $\phi$-psh. By using the $\partial\dbar$-comparison theorem, we get that this is also a test curve on $X$. Thus, $\phi_\mathbb C$ is an admissible metric for the Ohsawa-Takegoshi theorem on $X$.

We now compute the limit in the central fiber. Choose holomorphic coordinates $w=(w_1,....,w_n)$ centered at $x$ and normalized by
\[
 \omega_m(x)=\frac{i}{2}\sum_{j=1}^n\,dw_j\wedge d\bar w_j,
\]
and choose a local frame of $mL$ for which $m\phi(x)=0$. Then
\[
 d_m(w,x)=|w|+O(|w|^2),
\]
so, in the deformation chart $w=s\xi$,
\[
 E(s\xi) =\frac{|s|^2|\xi|^2}{4\ell}+O(|s\xi |^3).
\]
Hence, we get
\[
\lim_{\substack{s\to 0\\s\neq 0}}\phi_{\mathbb C}(\xi,s)=\lim_{\substack{s\to 0\\s\neq 0}}\left(m\phi(s\xi)+2\ell\log\frac{1+\frac{|\xi|^2}{4\ell}+O(|\xi|^3|s|)}{1+E(s\xi)}\right)=2\ell \log\left(1+\frac{|\xi|^2}{4\ell}\right).
\]
Now we apply the Ohsawa-Takegoshi theorem to find a form $F\in H^0(X,K_X+mL)$ with $iF\wedge\overline{F}e^{-m\phi}(x)=\omega_m^n(x)$ and 
\begin{align}
    \int_Xi^{n^2}F\wedge\overline{F}e^{-m\phi}\le \frac{n!}{2^n} \int_{\mathbb C^n}\frac{i^{n^2}d\xi\wedge d\bar \xi}{(1+\frac{|\xi|^2}{4\ell})^{2\ell}},
\end{align}
where $d\xi:=d\xi_1\w \cdots\w d\xi_n$.
Hence we have that 
\[
K_{m\phi}\ge \left(\int_{\mathbb C^n}\frac{i^{n^2}d\xi\wedge d\bar \xi}{2^n(1+\frac{|\xi|^2}{4\ell})^{2\ell}}\right)^{-1}\frac{\omega_m^n}{n!}.
\]
We calculate
\begin{align*}
   \int_{\mathbb C^n} \frac{i^{n^2}d\xi\wedge d\bar \xi}{2^n(1+\frac{|\xi|^2}{4\ell})^{2\ell}}&=\frac{2\pi^n}{\Gamma(n)}\int_0^\infty\frac{r^{2n-1}dr}{(1+\frac{r^2}{4\ell})^{2\ell}}\\
    &=\frac{(4\pi \ell)^n}{\Gamma(n)}\int_{0}^\infty\frac{x^{n-1}dx}{(1+x)^{2\ell}}\\
    &=\frac{(4\pi \ell)^n\Gamma(2\ell-n)}{\Gamma(2\ell)}.
\end{align*}
Thus we have proven both Theorem B(1) and Theorem B(n).
\subsection{Proof of Theorem A}
The assumption $\Ric\omega\leq\omega$ is equivalent to $\Scal_\omega\leq1$. By Theorem~\ref{thm:klingenberg} and $L_0\geq2\pi$,
\[
 \inj_\omega(X)\geq\pi.
\]
Apply Theorem B(1) with $\ell=m$. Its hypotheses hold at every point, and \eqref{eq:theorem-A} follows.

To see that this is sharp in the example of $\mathbb P^1$ polarized by $\mathcal{O}(2)$, note first that the Bergman form in this case is a constant multiple of $\omega$. Since the integral of the Bergman form must be equal to 
\[
\dim H^0(\mathbb P^1,K_X+\mathcal{O}(2m)) = 2m-1,
\]
and moreover, $\int\omega=4\pi$, we have that 
\[
K_{m\phi} = \frac{2m-1}{4\pi}\omega.
\]
\subsection{Proof of Corollary \ref{cor:curvature-asymptotics}}
By Klingenberg's estimate,
\[
\mathrm{inj}_\omega(x)\ge \frac{\pi}{\sqrt \kappa}.
\]
Thus for 
\[
\ell_m:=\left\lfloor\frac{m}{\kappa}\right\rfloor,
\]
one has
\[
\pi \sqrt \frac{\ell_m}{m}\le \frac{\pi}{\sqrt \kappa}, \qquad \kappa \le \frac{m}{\ell_m}.
\]
Now Theorem B gives 
\[
K_{m\phi}\ge \frac{\omega}{2\pi}\left(m-\frac{m}{2\ell_m}\right).
\]
But since $m/\ell_m = \kappa+O_\kappa(m^{-1})$, we have
\[
K_{m\phi}\ge \frac{\omega}{2\pi}\left(m-\frac{\kappa}{2}+O_\kappa(m^{-1})\right).
\]

\subsection{Proof of the B\l ocki--Darvas estimate }
The first thing the reader should note is that we want a result about the Bergman kernel of $mL$ rather than $mL+K_X$. To rectify this, we seek a lower bound for the Bergman kernel form of $mL-K_X$. Thus we actually need some bound for the curvatures of this bundle. First, note that we have by compactness that there exists $\rho>0$ such that
\[
 \rho\omega\ge \mathrm{Ric}\,\omega\ge -\rho\omega.
\]
Thus we have for the K\"ahler form $\omega_{\varphi,m}:=m(\omega+\ddbar\varphi)+\mathrm{Ric\,}\omega$ that
\[
\omega_{\varphi,m}\ge (am-\rho)\omega>0.
\]
For convenience, we introduce the notation $a_m:=am-\rho$. By compactness of $X$ we also have that there is a number $\kappa>0$ such that 
\[
K_{a_m\omega}(V,W)\le \frac{\kappa}{m},\quad V\perp W\in T_xX.
\]
Thus, as long as $m$ is chosen large enough, we may find $\ell_m$ with 
\[
4\ell_m\le \frac{m}{\kappa}.
\]
Thus we have for $(\mathbb P^n,\mathcal{O}_{\mathbb P^n}(2\ell_m))$ that
\[
K_{\mathbb P^n}(U_2,V_2) \ge \frac{1}{4\ell_m}\ge K_{a_m\omega}(U_1,V_1), \qquad U_1\perp V_1\in T_xX,\, U_2\perp V_2\in T_0\mathbb P^n. 
\]
We add a similar constraint in choosing $\ell_m$: it must be such that $\inj_{a_m \omega}(X)\ge \pi\sqrt \ell_m$, which means that the injectivity radius of $X$ is bigger than that of projective space. Thus we may use the comparison theorem to compare $(X,a_m\omega)$ with $(\mathbb P^n,\mathcal{O}_{\mathbb P^n}(2\ell_m))$ and construct the same metric as in the proof of Theorem B. The key now is that the metric is not only psh relative to $a_m\omega$ (or rather $a_m\phi$), but also psh relative to $\omega_{\varphi,m}$ since we have
\[
\omega_{\varphi,m}+\ddbar v_\alpha=(\omega_{\varphi,m}-a_m\omega)+(a_m\omega+\ddbar v_\alpha)\ge 0.
\]
Thus we get the same estimate for the Bergman kernel relative to $\varphi$.
The resulting Ohsawa--Takegoshi estimate reads
\[
B_m^\varphi\ge \frac{a_m^n\Gamma(2\ell_m)}{(4\pi \ell_m)^n\Gamma(2\ell_m-n)} = \left(\frac{(a-\frac{\rho}{m})^n}{a^n}\prod_{j=1}^n\left(1-\frac{j}{2\ell_m}\right)\right)
 \frac{m^na^n}{(2\pi)^n}.
\]
The statement follows by multiplying both sides by $\omega^n$.

\section{{An effective upper bound}}

\subsection{{A weighted submean estimate}}

\begin{lemma}\label{lem:weighted-submean}
Fix $x\in X$.  Suppose that a holomorphic coordinate $\xi$ identifies a
neighborhood of $x$ with the Euclidean disc $\mathbb D_C$ of radius $C$, sends $x$ to $0$,  and
satisfies
\begin{equation}{\label{eq:coordinate-control}
 \omega(x)=\frac{i}{2}d\xi\wedge d\bar\xi,
 \qquad
 \omega\leq A\frac{i}{2}d\xi\wedge d\bar\xi
 \quad\text{on }\mathbb D_C.
}\end{equation}
 Then, for every $m\in\mathbb N$,
\begin{equation}\label{eq:submean-result}
 K_{m\phi}(x)
 \leq
 \frac{m\omega(x)}{2\pi}\,
 \frac{A}{1-e^{-mAC^2/2}}.
\end{equation}\end{lemma}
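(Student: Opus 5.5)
The plan is the classical sub-mean value argument on the disc $\mathbb D_C$. The only twist is that the weight is corrected by a quadratic term, so that the subharmonic function we average is built from the weighted norm of the section.

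\textbf{Local setup.} Fix $F\in H^0(X,K_X+mL)\setminus\{0\}$. Trivialize $L$ over the coordinate disc by a holomorphic frame, and write $F=f\,d\xi\otimes e^{m}$, where $f$ is holomorphic on $\mathbb D_C$ and $\phi$ is the local weight. Then
\[
iF\wedge\overline F e^{-m\phi}=|f|^2e^{-m\phi}\,i\,d\xi\wedge d\bar\xi=2|f|^2e^{-m\phi}\,d\lambda,
\]
where $d\lambda$ is Lebesgue measure in $\xi$. Since $\omega(x)=d\lambda$ at $x$, the ratio in \eqref{eq:bergman-definition} at $x$ equals
\[
\frac{|f(0)|^2e^{-m\phi(0)}\,\omega(x)}{\int_X|f|^2e^{-m\phi}\,d\lambda}.
\]
In this expression we bound the denominator from below by $\int_{\mathbb D_C}|f|^2e^{-m\phi}\,d\lambda$. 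The choice of frame is irrelevant, because it changes $f$ and $e^{-m\phi}$ compatibly.

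\textbf{Curvature correction.} Note that $i\partial\bar\partial\phi=\tfrac12\Delta\phi\cdot\tfrac i2 d\xi\wedge d\bar\xi$. The hypothesis \eqref{eq:coordinate-control} therefore gives $\Delta\phi\le 2A$ on $\mathbb D_C$. Set $u:=\phi-\tfrac A2|\xi|^2$. Then $\Delta u\le 0$, so $u$ is superharmonic. It follows that
\[
v:=\log|f|^2-mu
\]
is subharmonic on $\mathbb D_C$; it may equal $-\infty$ at zeros of $f$, which is harmless.

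\textbf{Averaging over circles.} For $0<r<C$, the sub-mean value property and Jensen's inequality give
\[
|f(0)|^2e^{-m\phi(0)}=e^{v(0)}\le \exp\Big(\frac1{2\pi}\int_0^{2\pi}v(re^{i\theta})\,d\theta\Big)\le\frac1{2\pi}\int_0^{2\pi}|f|^2e^{-m\phi}(re^{i\theta})\,e^{mAr^2/2}\,d\theta .
\]
Multiply by $r\,e^{-mAr^2/2}$ and integrate over $r\in(0,C)$. Using
\[
\int_0^C re^{-mAr^2/2}\,dr=\frac{1-e^{-mAC^2/2}}{mA},
\]
we obtain
\[
|f(0)|^2e^{-m\phi(0)}\,\frac{2\pi(1-e^{-mAC^2/2})}{mA}\le\int_{\mathbb D_C}|f|^2e^{-m\phi}\,d\lambda .
\]
Inserting this into the ratio from the local setup gives \eqref{eq:submean-result}, after taking the supremum over $F$.

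\textbf{Where care is needed.} There is no real obstacle; the argument reduces entirely to the classical sub-mean value property once the weight is corrected. The points to check carefully are the normalization constants. These are the factor $2$ relating $i\,d\xi\wedge d\bar\xi$ to $d\lambda$, which cancels between numerator and denominator, and the conversion of \eqref{eq:coordinate-control} into the Laplacian bound $\Delta\phi\le 2A$. The choice of $\tfrac A2|\xi|^2$ as the correcting term is what produces exactly the factor $A/(1-e^{-mAC^2/2})$.
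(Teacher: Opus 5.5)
Your proof is correct and follows essentially the same route as the paper. The paper also corrects the weight by $\frac{A}{2}|\xi|^2$, applies the circular sub-mean property to $|f|^2e^{m(\frac{A}{2}|\xi|^2-\phi)}$, and integrates against $e^{-mA|\xi|^2/2}$ to obtain the factor $\frac{4\pi}{mA}\bigl(1-e^{-mAC^2/2}\bigr)$. The differences are only cosmetic: you justify the subharmonicity of the exponentiated function via $\log|f|^2$ and Jensen's inequality instead of asserting it, and you carry $e^{-m\phi(0)}$ rather than normalizing $\phi(0)=0$.
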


\begin{proof}
Choose a trivialization of $L$ whose weight, still denoted by $\phi$, satisfies
$\phi(0)=0$.  By \eqref{eq:coordinate-control},
\[
\psi(\xi):=\frac{A}{2}|\xi|^2-\phi(\xi)
\]
is subharmonic.  Write a section of $H^0(X,K_X+mL)$ as
$F=f(\xi)d\xi\otimes e_{L}^{\otimes m}$ where $e_{L}$ is a local frame, and put
\[
u(\xi):=|f(\xi)|^2e^{m\psi(\xi)}.
\]
 The function $u$ is subharmonic.  Its circular mean is therefore at least
$u(0)=|f(0)|^2$.  Consequently,
\begin{align*}
 \int_X iF\wedge\overline F\,e^{-m\phi}
 &\geq
 \int_{\mathbb D_C}u(\xi)e^{-mA|\xi|^2/2}
 i\,d\xi\wedge d\bar\xi\\
 &\geq
 |f(0)|^2\frac{4\pi}{mA}
 \left(1-e^{-mAC^2/2}\right).
\end{align*}
At $x$, the numerator in \eqref{eq:bergman-definition} is
$i|f(0)|^2d\xi\wedge d\bar\xi=2|f(0)|^2\omega(x)$. Since the bound holds regardless of the choice of $F$, this proves \eqref{eq:submean-result}.
\end{proof} 

\subsection{{Quantitative isothermal coordinates}}
We now extract the precise coordinate statement needed from Eilat's
quantitative isothermal-coordinate theorem \cite{eilat2025}.

\begin{proposition}[Effective isothermal chart]\label{prop:eilat-chart}
Let $0<\delta\leq\frac{1}{2}$ and suppose that
$|\Scal_\omega|\leq1$ on $B(x,\delta)$ and that
$\inj_\omega(p)\geq2\delta$ for every point of this ball.  Then there is a
holomorphic coordinate $\xi$ centered at $x$, with image $\mathbb D_C$, such that
\begin{equation}\label{eq:eilat-C}
 C\geq2\tanh\frac{\delta}{2}
\end{equation}
and
\begin{equation}\label{eq:eilat-A}
 \omega(x)=\frac{i}{2}d\xi\wedge d\bar\xi,
 \qquad
 \omega\leq A(\delta)\frac{i}{2}d\xi\wedge d\bar\xi,
 \qquad
 A(\delta):=
 \frac{\delta^2e^{8\delta^2}}{4\tanh^2(\delta/2)}.
\end{equation} 
\end{proposition}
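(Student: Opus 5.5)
The plan is to take Eilat's quantitative isothermal-coordinate theorem \cite{eilat2025} as a black box and renormalize its chart. Concretely, I will rescale a conformal chart on the geodesic ball $B(x,\delta)$ so that the normalization $\omega(x)=\frac{i}{2}d\xi\wedge d\bar\xi$ holds, and then read off $C$ and $A$ from two bounds on the conformal factor: a two-sided oscillation bound, and a lower bound at the centre.

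\textbf{Step 1 (a conformal disc).} Since $\inj_\omega(p)\geq 2\delta$ on $B(x,\delta)$, the exponential map at $x$ is a diffeomorphism onto $B(x,\delta)$, so the ball is a smoothly bounded topological disc. Its complex structure gives, by uniformization, a biholomorphism $\zeta:B(x,\delta)\to\mathbb D_{r}$ with $\zeta(x)=0$. We may write
\[
\omega=e^{2u}\tfrac{i}{2}d\zeta\wedge d\bar\zeta,\qquad \Delta_\zeta u=-\Scal_\omega e^{2u}.
\]
Eilat's theorem, applied under $|\Scal_\omega|\le 1$ and the injectivity bound, fixes the radius $r$ and controls $u$. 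The shape of $A(\delta)$ suggests I aim for the form $r=1$ (the unit disc) together with
\[
e^{u}\le \delta e^{4\delta^2}\ \text{on }\mathbb D
\]
and a matching lower bound at the origin. The factor $\delta$ reflects that a ball of radius $\delta$ is being squeezed into the unit disc, and the factor $e^{4\delta^2}$ is the curvature correction, whose square gives the $e^{8\delta^2}$ appearing in $A(\delta)$.

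\textbf{Step 2 (lower bound at the centre).} I need $e^{u(0)}\geq 2\tanh(\delta/2)$. I would obtain this by comparison with the hyperbolic disc of curvature $-1$. In the Poincaré model $\frac{4|dz|^2}{(1-|z|^2)^2}$, the geodesic ball of radius $\delta$ about $0$ is the Euclidean disc of radius $\tanh(\delta/2)$, and its conformal factor at the centre is $2$. Since $\Scal_\omega\geq -1$, the model is "more negatively curved", and I would run an Ahlfors--Schwarz type maximum-principle argument: compare $u$, pulled back to the unit disc, with the hyperbolic conformal factor rescaled to that disc. The Laplacian identity above is the differential inequality needed, and the inequality $d(x,\partial B)=\delta$ provides the boundary comparison. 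If Eilat's statement already contains such a centre bound, I would instead simply quote it.

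\textbf{Step 3 (rescaling).} Set $\xi:=e^{u(0)}\zeta$. Then $\omega(x)=\frac{i}{2}d\xi\wedge d\bar\xi$, and the image of $\xi$ is $\mathbb D_C$ with
\[
C=e^{u(0)}\geq 2\tanh(\delta/2),
\]
which is \eqref{eq:eilat-C}. Moreover $\omega=e^{2u-2u(0)}\frac{i}{2}d\xi\wedge d\bar\xi$. Combining the upper bound from Step 1 with Step 2 gives
\[
e^{2u-2u(0)}\le \frac{\delta^2e^{8\delta^2}}{4\tanh^2(\delta/2)}=A(\delta),
\]
which is \eqref{eq:eilat-A}.

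\textbf{Main obstacle.} The hard part is Step 1, together with the centre bound of Step 2. I have to extract from Eilat's theorem exactly the constants $\delta e^{4\delta^2}$ and $2\tanh(\delta/2)$, and check that his hypotheses are covered by $|\Scal_\omega|\le1$, $\delta\le\frac12$ and $\inj\ge2\delta$ on the ball. The restriction $\delta\le\tfrac12$ is presumably what keeps Eilat's error exponent in the form $4\delta^2$. If his bound is phrased with a different normalization, for instance harmonic coordinates of radius $\delta$ rather than the unit disc, the rescaling in Step 3 must be adapted accordingly, but the structure of the argument is unchanged.
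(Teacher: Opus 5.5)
Your Steps 1 and 3 are the paper's proof in a different normalization. The paper takes Eilat's chart $z\colon B(x,\delta)\to\mathbb D_\delta$ with $\omega=\frac{i}{2}\varrho\,dz\wedge d\bar z$ and $\sup_{\mathbb D_\delta}|\log\varrho|\le 8\delta^2$; Eilat's requirement $\delta^2<\pi^2/8$ is automatic for $\delta\le\frac12$. Writing $\zeta=z/\delta$ gives $e^{2u}=\delta^2\varrho$, which is exactly your bound $e^{u}\le\delta e^{4\delta^2}$. The rescaling $\xi=\sqrt{\varrho(0)}\,z=e^{u(0)}\zeta$ then produces the same $C$ and $A(\delta)$. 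For the centre bound, the paper simply cites Eilat's Equation (7), $\varrho(0)\ge 4\tanh^2(\delta/2)/\delta^2$. This is your $e^{u(0)}\ge 2\tanh(\delta/2)$, so your fallback ("simply quote it") is exactly what the paper does.

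The self-contained Ahlfors--Schwarz argument you propose for Step 2 would not work, however.

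\textbf{Why the comparison of conformal factors fails.} Let $v(w)=\log\frac{2\tanh(\delta/2)}{1-\tanh^2(\delta/2)|w|^2}$ be the rescaled hyperbolic factor. The inequalities $\Delta u\le e^{2u}$ and $\Delta v=e^{2v}$ rule out a negative interior minimum of $u-v$. So the maximum principle gives $u\ge v$ only if $u\ge v$ on $|w|=1$, i.e.\ $e^{u}\ge\sinh\delta$ there.
\begin{itemize}
\item The fact that $\partial B$ lies at distance $\delta$ gives no pointwise information of this kind. It only gives $\int e^{u}|dw|\ge\delta$ along curves from $0$ to $|w|=1$.
\item Worse, the global comparison is false. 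In the flat case $e^{u}\equiv\delta$, which exceeds $e^{v(0)}=2\tanh(\delta/2)$ at the centre but is smaller than $\sinh\delta$ on the boundary. So $u-v$ changes sign, and no argument proving $u\ge v$ everywhere can succeed.
\end{itemize}

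\textbf{What works instead.} The comparison has to be made on the distance function rather than on the conformal factor.
\begin{itemize}
\item Since $K\ge-1$ and $d=d_\omega(\cdot,x)$ is smooth on $B(x,\delta)\setminus\{x\}$ (because $\inj_\omega(x)\ge2\delta$), Laplacian comparison gives $\Delta_\omega d\le\coth d$.
\item Hence $\Delta_\omega\log\tanh(d/2)=\frac{\Delta_\omega d}{\sinh d}-\frac{\cosh d}{\sinh^2 d}\le 0$.
\item $\log|\zeta|$ is $\omega$-harmonic off $0$, since $\Delta_\omega=e^{-2u}\Delta_\zeta$. Therefore $h:=\log\tanh(d/2)-\log|\zeta|$ is superharmonic on the punctured disc.
\item As $\zeta\to0$, $h\to u(0)-\log 2$, because $d\sim e^{u(0)}|\zeta|$. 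The singularity at $0$ is therefore removable.
\item As $|\zeta|\to1$, $h\to\log\tanh(\delta/2)$, because $d\to\delta$.
\item The minimum principle gives $u(0)-\log2\ge\log\tanh(\delta/2)$, i.e.\ $e^{u(0)}\ge 2\tanh(\delta/2)$.
\end{itemize}
With this replacement, or with the citation of Eilat's Equation (7), your argument is complete.
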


\begin{proof}
Eilat's theorem gives an isothermal coordinate
$z:B(x,\delta)\to \D_\delta$, $z(x)=0$, in which
\[
 \omega=\frac{i}{2}\varrho(z)\,dz\wedge d\bar z,
 \qquad
 \sup_{\D_\delta}|\log\varrho|\leq8\delta^2.
\]
The injectivity and curvature hypotheses above are exactly those required
in \cite[Theorem~1.1 and Corollary~1.2]{eilat2025}; the inequality
$\delta^2<\pi^2/8$ used there is automatic.  Eilat's estimate in \cite[Equation (7)]{eilat2025}, further gives
\[
 \varrho(0)\geq
 \frac{4\tanh^2(\delta/2)}{\delta^2}.
\]
Set $\xi=\sqrt{\varrho(0)}\,z$.  The image is the disc of radius
$C=\delta\sqrt{\varrho(0)}$, which gives \eqref{eq:eilat-C}.  Moreover, for the coordinate $\xi$,
\[
\omega=\frac{i}{2}\frac{\varrho(z)}{\varrho(0)}
 d\xi\wedge d\bar\xi
 \leq
 \frac{\delta^2e^{8\delta^2}}{4\tanh^2(\delta/2)}
 \frac{i}{2}d\xi\wedge d\bar\xi.
\]
This proves \eqref{eq:eilat-A}.
\end{proof}

\subsection{{Proof of the global upper bound}}
\begin{proof}[Proof of Theorem C]
The curvature hypothesis gives $|\Scal_\omega|\leq1$.  By
Theorem \ref{thm:klingenberg}, the upper bound $\Scal_\omega\leq1$ and
 $L_0\geq2\pi$ imply
\[
 \inj_\omega(X)\geq\pi.
\]
For $m\in\mathbb N$, choose
\begin{equation}\label{eq:delta-choice}
 \delta=\delta(m):=
 \min\left\{\frac{1}{2},\sqrt{\frac{2\log(2m)}{m}}\right\}.
\end{equation}
 Then Proposition \ref{prop:eilat-chart} applies at every $x\in X$.  Combining this with Lemma \ref{lem:weighted-submean} yields
\begin{equation}\label{eq:upper-preconstant}
 K_{m\phi}(x)
 \leq
 \frac{m\omega(x)}{2\pi}
 \frac{A(\delta)}{1-e^{-mA(\delta)C^2/2}}.
\end{equation}
We estimate the two factors explicitly.  We have $\delta^2\leq1/4$.  Since
$\tanh y\geq y-y^3/3$ for $y\geq0$,
\[
 \frac{\delta}{2\tanh(\delta/2)}
 \leq\frac{1}{1-\delta^2/12}.
\]
Hence for $t:=\delta^2$
\[
 A(\delta)
 \leq f(t):=e^{8t}(1-t/12)^{-2}.
\]
The function $f$ is convex on $[0,1/4]$, and therefore for $t$ in this interval, we have 
\[
 f(t)\le 4tf(1/4)+(1-4t)f(0) = 4t\bigl(f(1/4)-1\bigr)+1.
\]
Since
\[
4\bigl(f(1/4)-1\bigr)<27
\]
we obtain
\begin{equation}\label{eq:A-linear}
 A(\delta)\leq1+27\delta^2.
\end{equation} 
On the other hand, \eqref{eq:eilat-C} and \eqref{eq:eilat-A} give
\[
 \frac{mA(\delta)C^2}{2}
 \geq\frac{m\delta^2e^{8\delta^2}}{2}.
\]
If $\delta =\sqrt{\frac{2\log(2m)}{m}}$, the last
quantity is at least $\log(2m)$.  If $\delta=1/2$, it is at least
$me^2/8\geq\log(2m)$ for every integer $m\geq1$.  Thus
\begin{equation}\label{eq:denominator-bound}
 1-e^{-mA(\delta)C^2/2}\geq1-\frac{1}{2m}.
\end{equation}
 Finally, $\delta^2\leq2\log(2m)/m$, so
\begin{align}
\frac{A(\delta)}{1-(2m)^{-1}}
 &\leq
 \frac{m+54\log(2m)}{m-\frac{1}2} \notag\\
 &=1+\frac{54\log(2m)+\frac{1}2}{m-\frac{1}2} \notag\\
 &\leq
 1+\frac{54.8\log(2m)}{m-\frac{1}2}.
 \label{eq:constant-54-8}
\end{align}
Substitution into \eqref{eq:upper-preconstant} proves \eqref{eq:theorem-C}.
\end{proof}

\bibliographystyle{amsalpha}
\bibliography{ref.bib}

\end{document}